\newtheorem{thm}{Theorem}[section]
\newtheorem{lem}{Lemma}[section]
\newtheorem{cor}{Corollary}[section]
\theoremstyle{definition}
\def\-{\mbox{--}}
\newtheorem{claim}{Claim}
\newtheorem{obser}{Observation}
\newtheorem{conj}{Conjecture}[section]
\begin{document}
\title{A Tur\'an-type problem on distance two\footnote{Supported by NSFC.}}
\author{\small  Xueliang Li, Jing Ma, Yongtang Shi, Jun Yue\\
\small Center for Combinatorics and LPMC-TJKLC\\
\small Nankai University, Tianjin 300071, China\\
\small Email: lxl@nankai.edu.cn, majingnk@gmail.com,
shi@nankai.edu.cn, yuejun06@126.com}
\date{}
\maketitle

\begin{abstract}
A new tur\'an-type problem on distances on graphs was introduced by Tyomkyn and
Uzzell. In this paper, we focus on the case that the distance is two. We primely
show that for any value of $n$, a graph on $n$ vertices without three vertices
pairwise at distance $2$, if it has a vertex $v \in V(G)$, whose neighbours are
covered by at most two cliques, then it has at most $(n^2 - 1)/4 + 1$ pairs of
vertices at distance $2$. This partially answers a guess of Tyomkyn and Uzzell
[Tyomkyn, M., Uzzell, A.J.: A new Tur\'an-Type promble on distaces of graphs. Graphs
Combin. {\bf 29}(6), 1927--1942 (2012)].

{\flushleft\bf Keywords}: distance, Tur\'an-type problem, forbidden
subgraph

\end{abstract}

\section{Introduction}
In \cite{UT}, Tyomkyn and Uzzell introduced a new Tur\'an-type
problem on distances in graphs, which is an extension of the problem
studied by Bollob\'as and Tyomkyn in \cite{BT}, namely, determining
the maximum number of paths with length $k$ in a tree $T$ on $n$
vertices.

The problem on counting paths of a given length in a graph $G$ has
been studied since 1971, see, e.g., \cite{AK,A,BE,BS,BS1,By,REZ} and
the references therein. On the other hand, counting paths of length
$k$ in trees can be interpreted as counting pairs of vertices at
distance $k$. Tyomkyn and Uzzell asked a natural question as
follows.

{\bf Question.}  For a graph $G$ on $n$ vertices, what is the
maximum possible number of pairs of vertices at distance $k$?

Let $G=(V,E)$ be a connected simple graph. The distance between two
vertices $u$ and $v$ in $G$, denoted by $d_{G}(u,v)$, is the length
of a shortest path between $u$ and $v$ in $G$. Let $N_{G}(v)$ be the
neighborhood of $v$, and $d_{G}(v)=|N_{G}(v)|$ denote the degree of
vertex $v$. The greatest distance between any two vertices in $G$ is
the diameter of $G$, denoted by $diam(G)$. The set of neighbors of a
vertex $v$ in $G$ is denoted by $N(v)$ or $N^{1}(v)$, and the set of
vertices, whose distance is $i$ from $v$, is denoted by $N^{i}(v)$,
where $i \in \{1, 2, 3, \cdots, diam(G) \}$. Suppose that $V'$ is a
nonempty subset of $V$. The subgraph of $G$ whose vertex set is $V'$
and whose edge set is the set of those edges of $G$ that have both
ends in $V'$ is called the subgraph of $G$ induced by $V'$ and is
denoted by $G[V']$; we say that $G[V']$ is an induced subgraph of
$G$.  A clique in a graph $G$ is a subset of its
vertices such that every two vertices in the subset are connected by
an edge.

If $H$ is a graph, then we say that $G$ is $H$-free if $G$ does not
contain a copy of $H$ as an induced subgraph. The claw, denoted
by $C$, is the complete graph $K_{1,3}$ (see Figure \ref{fig1}). Thus, $G$
is said to be claw-free if it does not contain an induced subgraph
that is isomorphic to $C$.

\begin{figure}[h,t,b,p]
\begin{center}
\includegraphics[bb = 235 633 365 720, scale = 0.9]{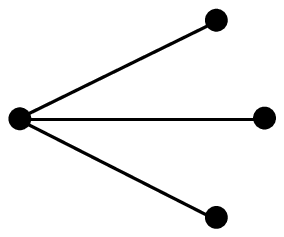}
\caption{A claw} \label{fig1}
\end{center}
\end{figure}

A graph $G$ is a \emph{quasi-line graph} if for every vertex $v \in V(G)$, the
neighborhood of $v$ can be partitioned into two sets $A$, $B$ in such a way that $A$
and $B$ are both cliques. (Note that there may be edges between $A$ and $B$.) Thus
all line graphs are quasi-line graphs, and all quasi-line graphs are claw-free, but
if we converse either of the statements, it is not true. For the other notations and
terminology we refer to \cite{BM}.

For a graph $G$, a new graph $G_k$ is defined to be the graph with
vertex set $V(G)$ and $\{x, y\}\in E(G_k)$ if and only if $x$ and
$y$ are at distance $k$ in $G$. We call $G_k$ the {\it distance-$k$
graph}. Such vertices $x$ and $y$ are called {\it $k$-neighbors}. We
call $d_{G_k}(x)$ the {\it $k$-degree} of $x$. Let $\omega(G)$
denote the clique number of graph $G$, which is the maximal number
of vertices of a clique in $G$.

It is interesting to maximize the number of edges in $G_k$ over all
graphs $G$ on $n$ vertices. In \cite{BT}, Bollob\'as and Tyomkyn
proved that if $G$ is a tree, then $e(G_k)$ is maximal when $G$ is a
$t$-broom for some $t$.
\begin{thm}
Let $n\geq k$. If $G$ is a tree on $n$ vertices, then $e(G_k)$ is
maximal when $G$ is a $t$-broom. If $k$ is odd, then $t = 2$. If $k$
is even, then $t$ is within $1$ of
$$\frac 1 4+\sqrt{\frac 1 {16}+\frac{n-1}{k-2}}.$$
\end{thm}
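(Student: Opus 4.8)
\noindent{\it Proof sketch.} The idea is to first show, by a sequence of local modifications (``compression''), that a tree maximizing $e(T_k)$ may be taken to have a very restricted shape, and then to solve an elementary optimization. Every pair $\{u,v\}$ with $d_T(u,v)=k$ lies on a unique $u$--$v$ path of length $k$; call its midpoint the \emph{center} of the pair --- a vertex if $k$ is even, an edge if $k$ is odd. Grouping the pairs by their centers, $e(T_k)$ can be written as a sum, over candidate centers, of products of ``shell sizes'' $|N^{j}(\cdot)|$ taken along the branches hanging off the center. The first (and main) step is to show that among extremal trees there is one for which: (i) all the weight hangs off a single vertex $c$ (when $k$ is even) or a single edge $xy$ (when $k$ is odd); (ii) each branch at $c$ (resp.\ at $x$ and at $y$) is a path whose far endpoint carries a cluster of pendant leaves; and (iii) these clusters have sizes differing by at most one. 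Each of (i)--(iii) follows from an exchange argument --- if a branch has a distance profile dominated by that of another branch, replace the weaker branch by a copy of the stronger one; if a pendant leaf sits at a sub-extremal depth, slide it outwards; if two clusters are unbalanced, move a leaf from the larger to the smaller --- and in each case one checks that $e(T_k)$ does not decrease.

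\emph{The odd case.} Let $k=2m+1$. By the reduction, we may assume $T$ is a \emph{double broom}: a handle $v_0v_1\cdots v_{k-2}$ on $k-1$ vertices, with $a$ pendant leaves at $v_0$ and $b$ pendant leaves at $v_{k-2}$, so $a+b=n-k+1$. The only pairs at distance $k$ are of the form (leaf at $v_0$)--(leaf at $v_{k-2}$), giving $ab$ such pairs, which is maximized when $a$ and $b$ are as equal as possible; hence the maximum is $\halff{n-k+1}\cdot\halfc{n-k+1}$. A short computation further shows that any configuration with three or more leaf-clusters wastes at least $m-1$ additional vertices on internal paths and is dominated by the double broom; thus $t=2$.

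\emph{The even case.} Let $k=2m$ with $k\ge 4$. By the reduction, we may assume $T$ is a spider with center $c$ and $t$ legs, each leg a path of $m-1$ edges whose endpoint carries a balanced cluster of $s$ pendant leaves, so that $1+t(m-1)+ts=n$. The pairs at distance $k$ are exactly the pairs of leaves lying in different clusters, whence
\[
e(T_k)=\binom{t}{2}s^{2}=\frac{t-1}{2t}\bigl(n-1-t(m-1)\bigr)^{2},
\]
with $s=\bigl(n-1-t(m-1)\bigr)/t$. Viewing the right-hand side as a function of a real variable $t$ and differentiating, its interior critical point satisfies $2(m-1)t^{2}-(m-1)t-(n-1)=0$; substituting $m-1=(k-2)/2$, the positive root simplifies to
\[
\frac14+\sqrt{\frac{1}{16}+\frac{n-1}{k-2}}.
\]
Since $e(T_k)$ vanishes at $t=1$ and at $t=(n-1)/(m-1)$ and has a single interior maximum, it is unimodal in the integer variable $t$, so the best integer $t$ lies within $1$ of this value.

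\emph{The main obstacle.} The genuinely delicate part is the structural reduction: one must control branches of unequal length, disentangle ``handle'' paths from ``reach'' paths, and make sure that each local move costs no distance-$k$ pair elsewhere in the tree. A clean way to organize this is to prove the upper bound $e(T_k)\le$ (the value attained by the appropriate broom) directly from the shell-size decomposition --- by bounding each summand and adding --- and only afterwards to exhibit the broom as the equality case, thereby avoiding an explicit case analysis of the compression moves.
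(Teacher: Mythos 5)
The paper you were given does not actually prove this statement: it appears there as Theorem 1.1, quoted from Bollob\'as and Tyomkyn, \emph{Walks and paths in trees} (reference [BT]), so there is no in-paper proof to measure your attempt against. On its own terms, your sketch follows the same strategy as that source: classify the distance-$k$ pairs by the midpoint (a vertex for even $k$, an edge for odd $k$) of the unique path joining them, compress an extremal tree to a broom, then optimize. Your final computations are correct. For $k=2m$, a spider with $t$ legs of $m-1$ internal vertices and $s=(n-1-t(m-1))/t$ leaves at each tip gives $e(T_k)=\binom{t}{2}s^2=\frac{t-1}{2t}\bigl(n-1-t(m-1)\bigr)^2$; the derivative in $t$ factors so that its unique zero on $\bigl(1,(n-1)/(m-1)\bigr)$ solves $2(m-1)t^2-(m-1)t-(n-1)=0$, whose positive root is $\frac14+\sqrt{\frac1{16}+\frac{n-1}{k-2}}$ after substituting $m-1=(k-2)/2$; and the odd case reduces to maximizing $ab$ subject to $a+b=n-k+1$, giving $t=2$.

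The gap is that everything before the computation, namely your reduction steps (i)--(iii), is asserted rather than proved, and that is where essentially all of the work lies. ``Replace the weaker branch by a copy of the stronger one'' is not a single local move: duplicating a branch changes the vertex count, so the exchange must be phrased as redistributing a fixed number of vertices among branches, and one must check that pairs whose midpoint lies elsewhere in the tree are not destroyed when a branch is altered. ``Slide a pendant leaf outwards'' can lose distance-$k$ pairs that the leaf formed within its own branch, so the gain has to be weighed against that loss. Finally, the ``within $1$'' conclusion needs the comparison to be carried out for the true integer-constrained objective $\sum_{i<j}s_is_j$ with cluster sizes differing by at most one, not only for the idealized real function $f(t)=\frac{t-1}{2t}(n-1-t(m-1))^2$; unimodality of $f$ (which does hold, since $f'$ has a single sign change on the relevant interval) gives the right heuristic but not yet the stated error bound. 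So the proposal is a correct outline with the optimization done properly, but the structural reduction that constitutes the theorem's actual content is missing.
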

\noindent For a general graph $G$, Tyomkyn and Uzzell \cite{UT} gave
a conjecture and proved a part of it in the following theorem.

\begin{conj}
Let $k \geq 3$ and $ t \geq 2$. There is a function $h_2: N \times N
\rightarrow N$ such that if $n \geq h_2(k,t)$, then $e(G_k)$ is
maximised over all $G$ with $|G|=n$ and $\omega(G_k) \leq t$ when
$G$ is $k$-isomorphic to a $t$-broom for some $t$.
\end{conj}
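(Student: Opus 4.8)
The plan is to read the problem as a distance-constrained refinement of Tur\'an's theorem. Since $\omega(G_k)\le t$ means $G_k$ contains no $K_{t+1}$, Tur\'an's theorem already yields $e(G_k)\le e(T(n,t))=\frac{t-1}{t}\cdot\frac{n^2}{2}(1+o(1))$, where $T(n,t)$ is the balanced complete $t$-partite graph. First I would compute exactly the $k$-edge count of the $t$-broom: for even $k=2m$ it is the complete $t$-partite graph on the brush leaves $B_1,\dots,B_t$ hanging off a skeleton of $1+t(m-1)$ internal vertices, so $e((\text{broom})_k)=\sum_{i<j}|B_i||B_j|$, which is maximised when $|B_i|\approx (n-t(m-1)-1)/t$. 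The crucial observation to record is that this equals $e(T(n,t))$ minus a correction of order $\Theta(tkn)$, coming from the ``wasted'' skeleton vertices, whose $k$-degree is small. Hence the raw Tur\'an bound overshoots by $\Theta(kn)$, and the genuine content of the conjecture is that this skeleton cost is \emph{unavoidable}.

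The second step is an Erd\H{o}s--Simonovits stability argument. Because the optimal $G$ satisfies $e(G_k)\ge e((\text{broom})_k)=(1-o(1))\,e(T(n,t))$ for $n$ large relative to $k$ and $t$, the stability theorem forces $G_k$ to be within $o(n^2)$ edges of $T(n,t)$. Translating this back into $G$, the vertex set splits into $t$ clusters $V_1,\dots,V_t$ for which almost all cross-cluster pairs lie at distance exactly $k$ and almost all within-cluster pairs lie at distance $\ne k$ (typically at distance $2$). This near-complete-$t$-partite picture is the approximate broom: the clusters are the brushes, and what remains is to pin the geometry down exactly.

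The heart of the argument is to upgrade the approximate structure to the exact one, and here I would reason about geodesics. Realising ``every cross pair at distance exactly $k$'' demands, for each pair of clusters, a system of shortest paths of length $k$; a four-point / tree-metric argument should show that a cluster cannot be held at a common distance from all the others except by attaching it to a short carrier path, and that these carriers must consume a definite number of low-$k$-degree vertices, recovering the $\Theta(tkn)$ skeleton cost as a true lower bound on the loss. A local surgery/symmetrisation step would then reroute any edge of $G$ that deviates from the broom so as to increase $e(G_k)$ without creating a $K_{t+1}$ in $G_k$, forcing the optimum to be a tree, namely the balanced $t$-broom. Parity of $k$ enters exactly here: for odd $k$ three clusters cannot be pairwise equidistant in a tree metric (equidistant points force a common centre with equal legs, hence an even distance $k-2$ between the brush roots), which is why the broom must collapse to the $t=2$ ``double broom'' regime when $k$ is odd.

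The main obstacle is the passage from ``near-$t$-partite'' to ``exactly a broom'' for \emph{non-tree} $G$. In a general graph distances do not obey the clean tree-metric rules, and a priori a $K_{t+1}$-free distance-$k$ graph could be realised by a dense, cyclic $G$ that reuses vertices across many geodesics and so pays less than the tree skeleton cost. Ruling out such ``efficient'' constructions---that is, proving the $\Theta(kn)$ loss cannot be beaten and that the extremal $G$ is forced, up to $k$-isomorphism, to be tree-like---is where I expect essentially all the difficulty to concentrate, and is presumably why so far only the $k=2$ case with a vertex whose neighbourhood is covered by two cliques has been settled. I would attempt it by a quantitative double-counting of length-$k$ geodesics against the clique constraint, sharp enough to force the balanced broom; arranging the local surgery to preserve both the distance structure and the clique bound $\omega(G_k)\le t$ simultaneously is the delicate point.
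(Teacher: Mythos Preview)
The statement you are attempting to prove is not a theorem in the paper but \emph{Conjecture~1.1}, quoted verbatim from Tyomkyn and Uzzell. The paper offers no proof of it; the only result proved is Theorem~\ref{thm1.1} (equivalently Theorem~\ref{thm3.1}), which handles the very special case $k=2$, $t=2$, and only under the additional structural hypothesis that some vertex has its neighbourhood covered by at most two cliques. So there is no ``paper's proof'' to compare your proposal against: the conjecture is open.

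As a strategy document your outline is reasonable in spirit---Tur\'an plus stability to get an approximate $t$-partite structure on $G_k$, then geodesic/surgery arguments to force the broom---but it is not a proof, and you yourself locate the gap precisely. The entire difficulty lies in your ``main obstacle'' paragraph: passing from an $o(n^2)$-close-to-$T(n,t)$ distance-$k$ graph to an exact broom structure on $G$. Stability gives you nothing at the $\Theta(kn)$ scale where the conjecture lives, and your proposed ``quantitative double-counting of length-$k$ geodesics'' and ``local surgery preserving both the distance structure and $\omega(G_k)\le t$'' are wishes, not arguments. In particular, surgery on $G$ can change distances globally and non-monotonically, so the claim that one can always reroute a deviating edge to increase $e(G_k)$ without creating a $K_{t+1}$ is exactly the unproven heart of the matter. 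The paper's own contribution illustrates how delicate even the $k=2$, $t=2$ case is: it requires an extra hypothesis (two-clique neighbourhood) and several ad hoc case analyses, and still does not resolve the full $k=2$ question. Your sketch does not supply any mechanism that would survive for general $k$ and $t$.
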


\begin{thm}
There is a constant $k_0$ and a function $n_0: N\rightarrow N$ such
that for all $k \geq k_0$, all $n \geq n_0(k)$ and all graphs $G$ of
order $n$ with no three vertices pairwise at distance $k$,
\begin{center}
$e(G_k) \leq (n-k+1)^2/4$.
\end{center}
Moreover, if the equality holds, then $G$ is $k$-isomorphic to the
double broom.
\end{thm}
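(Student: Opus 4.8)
The plan is to extract the one structural consequence of the hypothesis --- $G_k$ is triangle-free, since no three vertices are pairwise at distance $k$ --- and combine it with the observation that, for large $k$, a pair of vertices at distance $k$ is ``expensive'': realising it eats up a path of $k-1$ intermediate vertices. Mantel's theorem already gives $e(G_k)\le\lfloor n^2/4\rfloor$, so what has to be gained is the $\Theta(kn)$ correction and the identification of the extremal graph. First I would reduce: $G$ may be assumed connected (no pair is at distance $k$ across components, so passing to a single component leaves $e(G_k)$ unchanged and $n$, hence $(n-k+1)^2/4$, no larger), and $\mathrm{diam}(G)\ge k$ (else $e(G_k)=0$); and if $e(G_k)<(n-k+1)^2/4$ there is nothing to prove. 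The whole task is then to control the set $S$ of vertices that are non-isolated in $G_k$.

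The heart of the argument is the claim that if $e(G_k)\ge(n-k+1)^2/4$ then $|S|\le n-k+1$, i.e.\ $G_k$ has at least $k-1$ isolated vertices; granting it, a second application of Mantel to the triangle-free graph $G_k[S]$ gives $e(G_k)=e(G_k[S])\le\lfloor|S|^2/4\rfloor\le(n-k+1)^2/4$, as wanted. To prove the claim I would proceed as follows. Near-extremality forces two high-degree vertices that are $k$-neighbours of one another: a vertex $x$ of maximum $k$-degree has $d_{G_k}(x)=|N_G^k(x)|\le n-k$ (the layers $N_G^1(x),\dots,N_G^{k-1}(x)$ are all non-empty), and counting the edges of $G_k$ leaving the independent set $N_G^k(x)$ produces a vertex $y\in N_G^k(x)$ with $d_{G_k}(y)=\Omega(n)$. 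Now fix any geodesic $x=z_0,z_1,\dots,z_k=y$. By the stability form of Mantel's theorem, $G_k$ admits a partition $V(G)=X\cup Y$, with $x\in X$ and $y\in Y$, such that all but $o(n^2)$ edges of $G_k$ go between $X$ and $Y$, $N_G^k(x)$ differs from $Y$ in only $o(n)$ vertices, and $N_G^k(y)$ differs from $X$ in only $o(n)$ vertices. For an internal vertex $z_j$ ($1\le j\le k-1$) we have $d_G(z_j,x)=j\le k-1$ and $d_G(z_j,y)=k-j\le k-1$; so if $z_j\in X$ then, being (up to the exceptional set) at distance $k$ from $y$, it contradicts $d_G(z_j,y)<k$, and symmetrically $z_j\notin Y$. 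Since $X\cup Y=V(G)$, each $z_j$ must lie in the exceptional set; a clean-up step --- iteratively discarding the few vertices of each side whose $G_k$-degree into the opposite side is atypically small, which by triangle-freeness removes all edges inside either side --- then pins these exceptional vertices down as carrying no edge of $G_k$ at all, giving $k-1$ distinct isolated vertices.

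For equality, suppose $e(G_k)=(n-k+1)^2/4$, so $n-k+1$ is even and every inequality above is tight. Then $G_k$ has exactly $k-1$ isolated vertices (one more would, by Mantel on $G_k[S]$, force $e(G_k)<(n-k+1)^2/4$), so $|S|=n-k+1$ and $G_k[S]$ is a triangle-free graph on $n-k+1$ vertices with $\lfloor(n-k+1)^2/4\rfloor$ edges; by the uniqueness in Mantel's theorem $G_k[S]\cong K_{a,a}$ with $a=(n-k+1)/2$. Hence $G_k\cong K_{a,a}\cup(k-1)K_1$, which is precisely $D_k$ for the double broom $D$ with spine of length $k-2$ and $a$ pendant vertices at each end; since $k$-isomorphism depends only on the distance-$k$ graph, $G$ is $k$-isomorphic to the double broom.

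The main obstacle is exactly the clean-up inside the core claim. Stability only describes $G_k$ up to an exceptional set of $o(n)$ vertices, while the geodesic $x=z_0,\dots,z_k=y$ contributes only $k-1=O(1)$ internal vertices, all of which must land in that exceptional set; one cannot dismiss this by counting, so one must genuinely show that two vertex-sets mutually at distance $k$ cannot be linked by a path shorter than $k$ with an endpoint ``leaking'' into one of the sets, and that the intermediate vertices of the linking path are entirely edge-free in $G_k$. Making this clean-up both valid and compatible with the \emph{exact} constant $(n-k+1)^2/4$ --- not merely $(n-k+1)^2/4+o(kn)$ --- is where the thresholds $k\ge k_0$ and $n\ge n_0(k)$ are really spent, and is the part I would expect to require the most care.
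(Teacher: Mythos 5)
First, note that the paper does not actually prove this theorem: it is quoted from Tyomkyn and Uzzell, and the authors explicitly refer the reader to \cite{UT} for the proof. So there is no in-paper argument to compare yours against, and your proposal has to stand on its own.

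Your high-level skeleton is sensible and, as far as the general shape goes, in the right spirit: triangle-freeness of $G_k$, a Mantel bound on the set $S$ of non-isolated vertices of $G_k$, and the identification of $k-1$ ``wasted'' vertices coming from a geodesic between two high-$k$-degree vertices, with the equality analysis via the uniqueness part of Mantel. However, there is a genuine gap at the single step that carries all the difficulty, and you have only named it, not closed it. Your entire bound rests on the claim that near-extremality forces $|S|\le n-k+1$, i.e.\ that every internal vertex $z_j$ of the chosen geodesic is \emph{completely} isolated in $G_k$. The stability form of Mantel's theorem only places each $z_j$ in an exceptional set of size $o(n)$, and membership in that set is compatible with $z_j$ having anywhere up to $o(n)$ edges of $G_k$; there is no local obstruction either, since a $k$-neighbour $w$ of $z_j$ with $w\in N_G^k(x)$ creates no triangle in $G_k$ (the triple $\{w,x,z_j\}$ has $d(x,z_j)=j<k$). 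If even one $z_j$ retains a single $G_k$-edge, then $|S|$ can be $n-k+2$ and Mantel on $G_k[S]$ gives only $(n-k+2)^2/4$, which overshoots the target by roughly $(n-k)/2$; the refinement ``delete low-degree vertices first'' yields $e(G_k)\le (n-k+1)^2/4+(k-1)D$ where $D$ bounds the $G_k$-degrees of the internal vertices, so one needs $D=0$ exactly, not $D=o(n)$. The proposed clean-up (iteratively discarding vertices whose degree into the opposite side is atypically small) does not produce this: it can certify that the surviving graph is bipartite-like, but it cannot distinguish an internal geodesic vertex with a handful of $G_k$-edges from a genuinely isolated one. Closing this gap is not a routine tightening of the stability argument -- it requires a separate structural argument about how a vertex at distance $<k$ from both $x$ and $y$ could acquire $k$-neighbours, and that argument is the actual content of the theorem. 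As written, the proposal proves only $e(G_k)\le (n-k+1)^2/4+o(kn)$, not the stated exact bound, and consequently the equality characterisation is also not established.
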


For the detailed proof and some terminology, we refer to \cite{UT}.
Actually, Tyomkyn and Uzzell try to do better about the bound, but
there is no good way. For the case of $k=2$, they believe that for
$n\geq5$, a triangle-free $G_2$ can have no more than  $(n-1)^2/4+1$
edges. They mentioned that this is clearly true for $n = 5$ and a
computer search verifies that it also holds for $6\leq n\leq 11$.
However, they cannot prove it in general. In this paper, we will
give an partially answer to it. Our main results are as follows:

\begin{thm}\label{thm1.1}
Let $G$ be a graph on $n$ vertices, which has no three vertices pairwise at distance
$2$. If there exists an vertex $v \in V(G)$, whose neighbors are covered by at most
two cliques, then it has at most $(n^2 - 1)/4 + 1$ pairs of vertices at distance
$2$.
\end{thm}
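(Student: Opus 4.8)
The plan is to stratify $V(G)$ by distance from the distinguished vertex $v$ and to count, layer by layer, the pairs at distance $2$. Write $N(v)=A\cup B$ with $A,B$ cliques, and set $N^{1}=N(v)$, $N^{2}=N^{2}(v)$, and $R=V(G)\setminus(\{v\}\cup N^{1}\cup N^{2})$, the set of vertices at distance at least $3$ from $v$ (those in other components included); put $a=|N^{1}|$, $b=|N^{2}|$, $c=|R|$, so that $n=1+a+b+c$. Several facts are immediate. In $G_2$ the vertex $v$ is joined to precisely the $b$ vertices of $N^{2}$. The set $N^{2}$ is independent in $G_2$, since two vertices of $N^{2}$ at distance $2$ would, together with $v$, form three vertices pairwise at distance $2$. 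Because $A$ and $B$ are cliques, $G_2$ has no edge inside $A$ and none inside $B$, so $G_2[N^{1}]$ is bipartite with parts $A\setminus B$ and $B\setminus A$, whence $e(G_2[N^{1}])\le|A\setminus B|\cdot|B\setminus A|$. And $G_2[R]$ is triangle-free, so $e(G_2[R])\le\lfloor c^{2}/4\rfloor$. It is convenient to aim for the slightly stronger estimate $e(G_2)\le(n-1)^{2}/4+1$, which implies the assertion since $(n-1)^{2}\le n^{2}-1$ for all $n\ge1$.

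The technical core is a trade-off governing the crossing edges. For $s\in N^{2}$ let $M_{s}=N_{G}(s)\cap N^{1}$; this is nonempty. If $M_{s}$ meets $A$ (resp.\ $B$) then $s$ is within distance $2$ of every vertex of that clique, so $N_{G_2}(s)\cap A=A\setminus M_{s}$ (resp.\ $N_{G_2}(s)\cap B=B\setminus M_{s}$); and since $G_2$ is triangle-free the set $N_{G_2}(s)$ is independent, so if $M_{s}$ meets both $A\setminus B$ and $B\setminus A$ then $M_{s}$ is forced to be a vertex cover of the bipartite graph $G_2[N^{1}]$. Conversely, $s$ is $G_2$-adjacent to $u\in N^{1}$ only if $su\notin E(G)$ and $s,u$ have a common $G$-neighbour, which must lie in $M_{s}\cup(N_{G}(s)\cap N^{2})$. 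These two observations pull in opposite directions: a dense $G_2[N^{1}]$ is worth up to $|A\setminus B|\cdot|B\setminus A|$ internal edges but forces each $M_{s}$, and hence the portion of $N^{1}$ that $s$ misses in $G_2$, to be large, whereas a sparse $G_2[N^{1}]$ is cheap internally but provides few common neighbours with which to realise crossing edges. Making this quantitative — in terms of $b$, $a_{1}=|A\setminus B|$, $a_{2}=|B\setminus A|$, and the matching number of $G_2[N^{1}]$ — will yield a workable upper bound for $e(G_2[N^{1}])+e_{G_2}(N^{1},N^{2})$.

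The most delicate step, and the one I expect to be the main obstacle, is disposing of $R$. An analogous analysis is available: every $r\in R$ has all of its $G$-neighbours in $N^{2}\cup R$, so $N_{G_2}(r)\cap N^{1}$ consists exactly of those $u\in N^{1}$ sharing a $G$-neighbour with $r$ inside $N^{2}$, and this set, being a $G_2$-neighbourhood, is $G_2$-independent. Unlike $N^{2}$, though, $R$ need not be independent in $G_2$ and can carry up to $\lfloor c^{2}/4\rfloor$ internal edges, so one must argue carefully — presumably by inspecting the vertices of $R$ closest to $v$ and how their neighbourhoods inside $N^{2}$ overlap — that the total contribution of $R$ (its internal edges together with the $R$--$N^{1}$ and $R$--$N^{2}$ edges) can never beat what is gained by instead enlarging $N^{2}$, so that the problem reduces to the case $R=\emptyset$. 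The slack ``$+1$'' in the target is exactly what absorbs the small sporadic configurations; $C_5$, in which $v$ has two non-adjacent neighbours, $R=\emptyset$, and $e(G_2)=(n-1)^{2}/4+1$ with $n=5$, shows that this slack is sometimes needed.

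Finally, in the reduced case $V(G)=\{v\}\cup N^{1}\cup N^{2}$ one writes $e(G_2)=b+e(G_2[N^{1}])+e_{G_2}(N^{1},N^{2})$, inserts the bound from the second paragraph, and maximises subject to $a+b=n-1$ and $a_{1}+a_{2}\le a$. When $N^{1}$ is a single clique the crossing edges number at most $ab-b$ (each $s$ misses at least its unique neighbour in $N^{1}$), so $e(G_2)\le ab\le(n-1)^{2}/4$; the case analysis then shows that allowing $G_2[N^{1}]$ to be non-empty never improves the total by more than $1$, which is the source of the ``$+1$''. The extremal configuration for $(n-1)^{2}/4$ is $N^{1}$ a clique of size $\lfloor(n-1)/2\rfloor$ joined by a matching of $G$ to a clique $N^{2}$, with $v$ adjacent to all of $N^{1}$; tracking equality throughout isolates all extremal graphs. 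This gives $e(G_2)\le(n-1)^{2}/4+1\le(n^{2}-1)/4+1$, as claimed.
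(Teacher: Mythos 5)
Your outline identifies the right structural facts ($N^2(v)$ is independent in $G_2$, $G_2[N^1]$ is bipartite between $A\setminus B$ and $B\setminus A$, each $G_2$-neighbourhood is independent), but at the two places where the theorem actually has to be proved you substitute an announcement for an argument, so as written this is a plan rather than a proof. First, the ``technical core'' --- a quantitative upper bound on $e(G_2[N^1])+e_{G_2}(N^1,N^2)$ in terms of $b$, $a_1$, $a_2$ and the matching number of $G_2[N^1]$ --- is never stated, let alone derived; you say that making the trade-off quantitative \emph{will} yield a workable bound, and later that ``the case analysis then shows'' that a non-empty $G_2[N^1]$ gains at most $1$. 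That last assertion is exactly where the $+1$ in the theorem comes from, so it cannot be left as a claim. Second, the elimination of $R$ (vertices at distance at least $3$ from $v$) is flagged by you as the main obstacle and then handled with ``presumably.'' This step is genuinely nontrivial in your framework: $R$ can carry up to $\lfloor c^2/4\rfloor$ internal $G_2$-edges while $N^2$ is forced to be $G_2$-independent, so replacing $R$-vertices by $N^2$-vertices destroys internal edges and you must show the crossing edges gained compensate; nothing in your sketch does this. The paper avoids the difficulty by a different device (Lemma~\ref{lem2.3}): an explicit edge-relocation that shrinks the diameter by one without decreasing $e(G_2)$, iterated until $\mathrm{diam}(G)=2$, after which $R=\emptyset$ automatically and $G[N^2(v)]$ is even a clique.

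On the parts you did verify, the local claims are correct (the $C_5$ example showing the $+1$ is tight is a good sanity check), and your intended route --- a direct layer-by-layer count with a trade-off inequality --- is genuinely different from the paper's, which instead applies a sequence of explicit graph modifications, each shown not to decrease the number of distance-$2$ pairs, terminating in a canonical configuration whose $G_2$ has exactly $xy+x+y+2\le (n-1)^2/4+1$ edges. If you can actually prove your trade-off inequality and the reduction to $R=\emptyset$, your approach would arguably be cleaner and would localise where equality occurs; but until those two steps are written out, the proof has two holes precisely at its load-bearing points.
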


From theorem \ref{thm1.1}, we can get the following corollary.

\begin{cor}\label{cor1.1}
Let $G$ be a quasi-line graph on $n$ vertices, which has no three vertices pairwise
at distance $2$, then it has at most $(n^2 - 1)/4 + 1$ pairs of vertices at distance
$2$.
\end{cor}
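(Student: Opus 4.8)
The Corollary is immediate from Theorem~\ref{thm1.1}: by definition a quasi-line graph $G$ has at \emph{every} vertex $v$ a partition $N(v)=A\cup B$ into two cliques, so in particular $N(v)$ is covered by at most two cliques; choosing any vertex $v$ (if $G$ is non-empty; otherwise the statement is vacuous), we are exactly in the hypothesis of Theorem~\ref{thm1.1}, and since $G$ has no three vertices pairwise at distance $2$ its conclusion applies verbatim. (The numerical bound exactly as written is in fact already a consequence of Mantel's theorem applied to the triangle-free graph $G_2$; I read Theorem~\ref{thm1.1} as really aiming at the sharper bound $(n-1)^2/4+1$ guessed by Tyomkyn and Uzzell, which genuinely uses the hypothesis on $v$, and it is that stronger statement I sketch below.)

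Since the Corollary rests entirely on Theorem~\ref{thm1.1}, I describe how I would prove the latter. First reduce to $G$ connected: if $G_0$ is the component containing $v$, with $|V(G_0)|=m$, then $v$ keeps the two-clique property in $G_0$, Mantel's theorem gives $e((G[C])_2)\le|C|^2/4$ for every other component $C$, and $(m-1)^2+\sum_C|C|^2\le(n-1)^2$ lets one assemble the pieces. Now write $N(v)=A\cup B$ with $A,B$ disjoint cliques, set $W=N^2(v)$ and $R=V\setminus(\{v\}\cup N(v)\cup W)$, and record what will be used repeatedly: $G_2$ is triangle-free; $\{v\}\cup A$ and $\{v\}\cup B$ are cliques of $G$, hence stable sets of $G_2$, so $G_2$ has no edge inside $A$, none inside $B$, and inside $N(v)$ only edges between $A$ and $B$; $N_{G_2}(v)=W$, so $d_{G_2}(v)=|W|$, $W$ is stable in $G_2$ (an edge inside $W$ would close a triangle of $G_2$ with $v$), and $d_{G_2}(w)\le n-|W|$ for each $w\in W$; finally $G$ has no edge between $R$ and $\{v\}\cup N(v)$, so a $G_2$-edge from $R$ to $N(v)$ can only arise through a common $G$-neighbour in $W$. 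A first sub-case is then immediate: since $e(G_2)=e(G_2-v)+|W|\le (n-1)^2/4+|W|$ by Mantel, the bound of Theorem~\ref{thm1.1} holds whenever $|W|\le (n+1)/2$ (and the sharper bound holds when $|W|\le 1$), so we may assume $W$ is large.

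For the main case I would use the two-clique structure through forbidden triangles in $G_2$: since $A$, $B$, $\{v\}\cup A$ and $\{v\}\cup B$ are stable in $G_2$, any vertex that is $G_2$-adjacent to two vertices $p,q$ with $pq\in E(G_2)$ creates a triangle, and this forces the vertices of $W$, and then those of $R$, to be partitioned according to which of $A$, $B$ their $G_2$-edges into $N(v)$ can reach; carrying the bookkeeping through, $V(G)$ is exhibited as an ``$A$-side'', a ``$B$-side'' and a small remainder across which essentially all $G_2$-edges run, so that a Mantel-type count on the two sides closes the estimate (the sub-case in which some edge of $G$ joins $A$ to $B$ is handled in the same way, with ``side'' replaced by a side of the bipartite non-adjacency graph on $A\cup B$). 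The main obstacle is precisely this last step: controlling the structure of $G_2$ on the small set $\{v\}\cup N(v)\cup R$ together with its attachment to the large stable set $W$ --- in particular how the vertices of $R$ can be $G_2$-joined to $N(v)$ only ``through'' $W$ --- requires a finite but real case analysis, and it is exactly the hypothesis that $N(v)$ is covered by two cliques that keeps that analysis finite.
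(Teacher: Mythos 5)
Your reduction is exactly the paper's proof of the corollary: by definition every vertex of a quasi-line graph has its neighbourhood covered by two cliques, so any vertex $v$ satisfies the hypothesis of Theorem~\ref{thm1.1} and the conclusion follows verbatim. Your parenthetical observations are also sound --- the bound as literally printed, $(n^2-1)/4+1$, already exceeds $n^2/4$ and hence follows from Mantel's theorem for the triangle-free graph $G_2$ with no hypothesis on $v$ at all, while the bound the paper actually proves (Theorem~\ref{thm3.1}) is the sharper $(n-1)^2/4+1$ --- but your long sketch of an independent proof of Theorem~\ref{thm1.1} is not needed for the corollary and, as you acknowledge, leaves its main case analysis open.
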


\section{Preliminaries}
In this section, we will discuss some restrictions on the structure
of graph $G$ on condition that $G_2$ is triangle-free.

Let $C_k$ be the cycle with $k$ vertices. First of all, we
define two graphs $C^{'}_6$ and $C^{''}_6$ which can be obtained from $C_6$ as follows:
$C^{'}_6=C_6+v_1v_3$, $C^{''}_6=C_6+v_1v_3+v_3v_5$.

\begin{lem}\label{lem2.1}
If $G_2$ is triangle-free, then $G$ is claw-free and $C_6$-free, $C^{'}_6$-free, $C^{''}_6$-free.
\end{lem}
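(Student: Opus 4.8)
The plan is to reduce all four statements to a single principle: if two vertices $x,y$ of $G$ are non-adjacent but have a common neighbour, then $d_G(x,y)=2$. So in each case I would exhibit, inside the supposed induced copy of the forbidden graph, three vertices that are pairwise non-adjacent and pairwise possess a common neighbour within that copy; such a triple is a triangle in $G_2$, contradicting the hypothesis. Since the forbidden graphs are required to appear as \emph{induced} subgraphs, two of their vertices are non-adjacent in $G$ exactly when they are non-adjacent in the subgraph, so every non-adjacency check (and every ``common neighbour'' check) can be done by inspection of the small graph.

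For the claw, with centre $u$ and leaves $a,b,c$, the leaves are pairwise non-adjacent and each is joined to $u$, so $\{a,b,c\}$ would be a triangle in $G_2$. For the three six-cycle graphs, label the vertices $v_1,\dots,v_6$ cyclically. The key point is that the chords added to $C_6$ to form $C^{'}_6=C_6+v_1v_3$ and $C^{''}_6=C_6+v_1v_3+v_3v_5$ only join odd-indexed vertices; hence in all three graphs the even-indexed vertices $v_2,v_4,v_6$ stay pairwise non-adjacent, and the pairs $\{v_2,v_4\},\{v_4,v_6\},\{v_6,v_2\}$ keep the common neighbours $v_3,v_5,v_1$ respectively. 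Thus $\{v_2,v_4,v_6\}$ would be a triangle in $G_2$ in each of the three cases. (For $C_6$ itself the odd triple $\{v_1,v_3,v_5\}$ works equally well.)

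I do not expect a real obstacle here; the argument is a short finite check. The only delicate point is making sure the distances in question are exactly $2$ and not $1$, which is precisely what the induced-subgraph hypothesis guarantees (non-adjacency is inherited between $G$ and any induced subgraph). The mild cleverness is in choosing the even triple $\{v_2,v_4,v_6\}$, which lets $C_6$, $C^{'}_6$ and $C^{''}_6$ be handled uniformly because none of the extra chords touches these vertices or removes the needed common neighbours.
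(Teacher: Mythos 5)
Your proof is correct and takes essentially the same approach as the paper: exhibit three pairwise non-adjacent vertices with pairwise common neighbours inside the supposed induced copy, giving a triangle in $G_2$. Your even triple $\{v_2,v_4,v_6\}$ is in fact the slightly better choice, since the paper's triple $\{v_1,v_3,v_5\}$ (used for $C_6$ and then dismissed with ``similarly'') does not carry over verbatim to $C^{'}_6$ and $C^{''}_6$, where $v_1v_3$ is an edge, whereas yours handles all three cases uniformly.
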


\begin{proof}
By contradiction. Suppose $G$ has a claw $C$ as an induced subgraph.
Let $V(C)=\{v, u_1, u_2, u_3\}$ and $v$ is adjacent to $u_i$ ($i=1,
2, 3$). Then the three vertices $u_1$, $u_2$ and $u_3$ form a
triangle in $G_2$, a contradiction.

Suppose that $G$ is not $C_6$-free. Let $C_6=v_1v_2\ldots v_6v_1$,
then $v_1v_3,\ v_3v_5,\ v_1v_5\in e(G_2)$, which implies that the
three vertices $v_1$, $v_3$ and $v_5$ form a triangle in $G_2$, a
contradiction.

Similarly, $G$ is $C^{'}_6$-free and $C^{''}_6$-free.
\end{proof}

The {\it stability number} $\alpha(G)$ of a graph $G$ is the
cardinality of the largest stable set. Recall that a stable set of
$G$ is a subset of vertices such that no two of them are
connected by an edge. For a claw-free graph, there is a well-known result
\cite{REZ} as follows.

\begin{lem}\label{lem2.2}
Let $G$ be a claw-free graph with stability number at least three, then every vertex $v$ satisfies exactly one of the following:

 (1) $N_G(v)$ is covered by two cliques;

 (2) $N_G(v)$ contains an induced $C_{5}$.

\end{lem}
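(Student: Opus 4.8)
The plan is to pass to the complement of the neighbourhood and translate everything into the language of odd holes. Write $H = G[N_G(v)]$ and let $\bar H$ be its complement. Since $G$ is claw-free, $v$ together with any three pairwise non-adjacent vertices of $N_G(v)$ would induce a claw; hence $\alpha(H) \le 2$, i.e. $\bar H$ is triangle-free. The two alternatives then read cleanly in $\bar H$: \emph{$N_G(v)$ is covered by two cliques} means $V(H)$ can be split into two cliques of $H$, which is exactly a proper $2$-colouring of $\bar H$, so alternative (1) holds if and only if $\bar H$ is bipartite. For alternative (2) I would use that $C_5$ is self-complementary, so an induced $C_5$ in $H$ is the same thing as an induced $C_5$ (a $5$-hole) in $\bar H$.

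With this dictionary the mutual exclusivity (\emph{exactly one}) is immediate: if $\bar H$ is bipartite then it has no odd cycle at all, in particular no $5$-hole, so (1) excludes (2). It remains to prove that at least one alternative always holds, i.e. that whenever $\bar H$ is non-bipartite it contains an induced $C_5$. As $\bar H$ is triangle-free, non-bipartiteness yields an odd cycle, and a shortest odd cycle $O = x_1 x_2 \cdots x_{2k+1}$ is automatically chordless, hence an induced odd hole of length $2k+1 \ge 5$. If $2k+1 = 5$ we are done, so the entire difficulty is to rule out the case $2k+1 \ge 7$.

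This is exactly where the global hypotheses must enter, and it is the step I expect to be the main obstacle. The vertices of $O$ induce in $N_G(v)$ a copy of $\overline{C_{2k+1}}$, whose independence number is $\omega(C_{2k+1}) = 2$; consequently no claw and no shorter hole can be found inside $\{v\} \cup V(O)$ by itself. Indeed, the graph consisting of $\overline{C_7}$ together with one vertex joined to all of it is claw-free and has stability number $2$, which shows that purely local information about $N_G(v)$ cannot suffice: the standing assumption that $\alpha(G) \ge 3$ \emph{and} that $G$ is connected must both be invoked. Concretely, since $v$ dominates $N_G(v)$ we have $\alpha(G[\{v\}\cup N_G(v)]) = \alpha(H) \le 2$, so $\alpha(G) \ge 3$ forces a vertex outside the closed neighbourhood of $v$, and by connectedness one may take such a vertex $z$ adjacent into $N_G(v)$. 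The plan is then to analyse, using claw-freeness, how $z$ meets the antihole: each consecutive pair $x_i, x_{i+1}$ is a non-edge of $G$ whose two endpoints have the common neighbour $v$, and one argues that the attachment of $z$ is forced either to create an induced claw (taking such a non-adjacent pair as two of the three leaves) or to supply a chord or short-cut yielding an odd hole shorter than $O$, contradicting its minimality. Carrying out this case analysis — in particular securing an external vertex that genuinely interacts with the antihole rather than only with the rest of $N_G(v)$, and handling the forced claws across the residue classes of the attachment — is the technical heart of the proof, while everything preceding it is the routine complement dictionary.
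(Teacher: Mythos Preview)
The paper does not prove this lemma; it is quoted as a known result from the claw-free survey \cite{REZ} (the statement is due to Fouquet and is sometimes called Ben Rebea's lemma), so there is no in-paper argument to compare against. Your complement dictionary --- claw-freeness gives $\alpha(H)\le 2$ so $\bar H$ is triangle-free, alternative (1) is bipartiteness of $\bar H$, alternative (2) is an induced $C_5$ in $\bar H$, and these are mutually exclusive --- is exactly the standard opening of the known proof.

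The gap is precisely where you locate it: you describe but do not carry out the step ruling out an induced odd cycle of length $\ge 7$ in $\bar H$. Your plan (take $z$ at distance two from $v$, analyse its attachment to the antihole, and force either a claw or a shorter odd hole) is indeed the outline of Fouquet's argument, but as written it remains a sketch rather than a proof. One further point: you are right that connectedness is required, and in fact the lemma as printed in the paper, without that hypothesis, is false. The disjoint union of your graph $v+\overline{C_7}$ with a single isolated vertex is claw-free with $\alpha\ge 3$, yet $N(v)$ induces $\overline{C_7}$, which neither splits into two cliques nor contains an induced $C_5$. The connectedness hypothesis should therefore be added, not circumvented.
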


From Lemma \ref{lem2.2}, we know that for a claw-free graph $G$, the subgraph induced by the neighborhood of a vertex $v\in G$ is covered by at most two cliques, or contains an included $C_{5}$. In the following, we give an observation about the subgraph induced by $N^2_G(v)$.

\begin{obser}\label{obser2.1}
Let $G$ be a graph with diameter two and $v\in V(G)$. If $G_2$ is triangle-free, then $G[N^2_G(v)]$ is a clique.
\end{obser}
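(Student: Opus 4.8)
The plan is a very short argument by contradiction, using only the definition of the distance-$2$ graph together with the diameter-two hypothesis. Suppose the conclusion fails, so that $G[N^2_G(v)]$ is not a clique. Then there exist two distinct vertices $x,y \in N^2_G(v)$ that are non-adjacent in $G$. Since $diam(G)=2$ we have $d_G(x,y)\le 2$, and because $x$ and $y$ are non-adjacent this forces $d_G(x,y)=2$; hence $\{x,y\}\in e(G_2)$.

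On the other hand, by the definition of $N^2_G(v)$ we have $d_G(v,x)=d_G(v,y)=2$, so $\{v,x\}\in e(G_2)$ and $\{v,y\}\in e(G_2)$ as well. Putting these three memberships together, the vertices $v$, $x$, $y$ form a triangle in $G_2$, contradicting the hypothesis that $G_2$ is triangle-free. Therefore every two vertices of $N^2_G(v)$ must be adjacent in $G$, i.e., $G[N^2_G(v)]$ is a clique. The degenerate cases $|N^2_G(v)|\le 1$ are trivial, since the empty graph and a single vertex are vacuously cliques.

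There is essentially no obstacle here: the statement is a direct consequence of the definitions, and it does not even require the claw-free structure established in Lemma \ref{lem2.1} — only the triangle-freeness of $G_2$ and $diam(G)=2$. The one point worth a sentence of care is the degenerate case just mentioned, and (for use later in the paper) noting that the same reasoning shows more generally that any set of vertices all at distance $2$ from a common vertex induces a clique in $G$ whenever $G_2$ is triangle-free and that set lies within distance two of itself.
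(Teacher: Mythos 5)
Your proof is correct: the paper states this as an Observation without supplying a proof, and your two-line contradiction argument (non-adjacent $x,y\in N^2_G(v)$ would be at distance exactly $2$ by the diameter hypothesis, giving a triangle $v,x,y$ in $G_2$) is precisely the intended justification. No issues.
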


Let $G^{(d)}$ denote a graph $G$ with diameter $d$. Let
$v_0v_1\ldots v_d$ be a spindle of $G^{(d)}$ and
$V_d=N_{G^{(d)}}(v_{d-1})\backslash v_{d-2}$. We define an operation
as follows: delete all the edges between $v_{d-1}$ and $V_d$, and
then join $v_{d-2}$ with all the vertices of $V_d$. We call this operation {\it ``move
$V_d$ to $v_{d-2}$"}.

\begin{lem}\label{lem2.3}
If $G_2$ is triangle-free, then $e(G^{(d)}_2)\leq e(G^{(d-1)}_2)$, where $G^{(d-1)}$ is obtained by applying the above operation on $G^{(d)}$.
\end{lem}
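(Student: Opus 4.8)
Write $H=G^{(d)}$, $H'=G^{(d-1)}$, and set $u=v_{d-1}$, $w=v_{d-2}$, $S=V_d=N_H(u)\setminus\{w\}$; recall $v_d\in S$ and $v_{d-3}\in N_H(w)\setminus(\{u\}\cup S)$. Split $S=S_0\cup S_1$ with $S_1=S\cap N_H(w)$ and $S_0=S\setminus N_H(w)$ (so $v_d\in S_0$). The operation alters adjacencies only at $u$ and $w$: in $H'$ we have $N_{H'}(u)=\{w\}$ (so $u$ becomes a leaf at $w$) and $N_{H'}(w)=N_H(w)\cup S$, the only new edges being $ws$ with $s\in S_0$; in particular $e(H')=e(H)-|S|+|S_0|=e(H)-|S_1|$. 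I would then use two elementary distance facts. (i) For all $x,y\ne u$ we have $d_{H'}(x,y)\le d_H(x,y)$: a shortest $H$-path avoiding $u$ survives in $H'$ since every deleted edge meets $u$, and a shortest $H$-path through $u$, which enters and leaves $u$ through vertices of $\{w\}\cup S$, can be rerouted through $w$ without lengthening because $S\subseteq N_{H'}(w)$. (ii) For $x\notin\{u,w\}$, $d_{H'}(u,x)=1+d_{H'}(w,x)$.

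Since $G$ is connected, $e(G_2)=\binom{n}{2}-e(G)-q(G)$, where $q(G)$ is the number of pairs at distance at least $3$; together with $e(H')=e(H)-|S_1|$, the desired inequality $e(H_2)\le e(H'_2)$ is equivalent to $q(H')\le q(H)+|S_1|$. By (i), among pairs not containing $u$ the quantity $q$ does not increase, and it decreases by exactly the number $M$ of such pairs that drop from distance $\ge 3$ in $H$ to distance $2$ in $H'$ (they cannot drop to distance $1$, as each new edge $ws$ joins a pair at distance $2$ in $H$). By (ii), among pairs $\{u,x\}$ the distance-$\ge3$ ones form, in $H$, exactly the set $\{x\notin\{w\}\cup S\cup N_H(w):\ x\text{ has no neighbour in }S\}$, and in $H'$, exactly $\{x\notin\{w\}\cup S\cup N_H(w)\}$; hence this set grows by
\[
T=\{\,x:\ x\notin\{w\}\cup S\cup N_H(w),\ x\text{ has a neighbour in }S\,\}.
\]
Therefore $q(H')-q(H)=|T|-M$, and the lemma reduces to the single inequality $|T|\le|S_1|+M$.

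This last inequality is the crux, and the main obstacle. The key structural input is that, for each $s\in S$, the vertices of $T$ adjacent to $s$ are pairwise adjacent: two non-adjacent such vertices, together with $u$ (which is non-adjacent to every vertex of $T$ but adjacent to $s$), would give an induced claw at $s$, excluded by Lemma~\ref{lem2.1}. Moreover, for $x\in T$ one has $d_H(u,x)=2$ always, and $d_H(w,x)=2$ exactly when $x$ has a neighbour in $N_H(w)$; otherwise $d_H(w,x)=3$ while $d_{H'}(w,x)=2$, so that pair belongs to the ``dropping'' family counted by $M$, and a further short check identifies the remaining members of $M$ as the pairs $\{p,s\}$ with $s\in S_0$, $p\in N_H(w)$ and $d_H(p,s)=3$. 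Using these facts, together with the absence of induced $C_6,C_6',C_6''$ in $H$ (Lemma~\ref{lem2.1}), I would construct an injection of $T$ into $S_1$ together with the $M$ dropping pairs --- charging each $x\in T$ with a neighbour in $S_1$ to such a neighbour (the pairwise-adjacency property prevents collisions unless a third distance-$2$ pair, such as $\{v_{d-3},u\}$, appears, producing a triangle in $H_2$), and charging each remaining $x\in T$ to the distance-$2$ pair it creates in $H'$. This yields $|T|\le|S_1|+M$, hence $e(H_2)\le e(H'_2)$, which is the inductive step used to drive the diameter down to $2$.
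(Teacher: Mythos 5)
Your reduction is carefully set up and, as far as it goes, correct: facts (i) and (ii) hold, the identity $e(H')=e(H)-|S_1|$ is right, and the bookkeeping $q(H')-q(H)=|T|-M$ does reduce the lemma to the single inequality $|T|\leq |S_1|+M$. But that inequality --- which you yourself flag as ``the crux, and the main obstacle'' --- is not proved. The charging scheme is only gestured at: a vertex $x\in T$ with a neighbour in $S_1$ is to be charged to that neighbour, but nothing rules out two such vertices sharing their only $S_1$-neighbour (the pairwise-adjacency of $N_H(s)\cap T$ does not by itself prevent this); and a vertex $x\in T$ with \emph{no} neighbour in $S_1$ creates a pair counted by $M$ only when $d_H(w,x)=3$. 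When $d_H(w,x)=2$ (your ``case B''), the pair $\{w,x\}$ was already at distance $2$ in $H$ and contributes nothing to $M$, so such vertices have no obvious image under your injection. This is exactly where the triangle-freeness of $H_2$ would have to be exploited (e.g., every $s\in S_0$ and every such $x$ are both at distance $2$ from $w$, so $d_H(x,s)\neq 2$), and the argument that closes this case is missing. So the proposal has a genuine gap.

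For comparison, the paper's own proof takes a much cruder route: it asserts that ``the only change \ldots is brought by the movement of $V_d$'' and then verifies only that the distance-$2$ degree of the single vertex $v_d$ does not decrease. Your accounting shows that this assertion is false as stated --- the operation destroys the pairs $\{v_{d-1},x\}$ for $x\in T$ and the pairs $\{v_{d-2},s\}$ for $s\in S_0$, none of which involve $v_d$ --- so the difficulty you isolate is real and is simply not confronted in the paper. In that sense your proposal is a more honest and more promising route than the published argument, but until $|T|\leq|S_1|+M$ is actually established (or a counterexample found), it is not a proof.
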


\begin{proof}
Let $v_0v_1\ldots v_d$ be a spindle of $G^{(d)}$. After applying the
above operation on $G^{(d)}$, the only change on the number of
vertex pairs $\{u, v\}$ such that $d_{G^{(d)}}(u, v)=2$ is brought by
the movement of $V_d$. Thus to prove $e(G^{(d)}_2)\leq e(G^{(d-1)}_2)$, it
suffices to show that for every spindle of $G^{(d)}$, the number of
vertex pairs such that the distance between them is two is not decreasing after
using the operation ``move $V_d$ to $v_{d-2}$".
So we only need to show that for some spindle, after using the above operation,
$|\{u|d_{G^{(d)}}(u, v_d)=2\}|\leq |\{u'|d_{G'}(u', v_d)=2\}|$,
where $G'$ is obtained by moving $V_d$ to $v_{d-2}$ in $G^{(d)}$.
Since $|\{u|d_{G^{(d)}}(u, v_d)=2\}|=|N_{G^{(d)}}(v_{d-1})\backslash
v_d|$ and $|\{u'|d_{G'}(u',
v_d)=2\}|=|N_{G^{(d)}}(v_{d-2})\backslash v_{d-2} \cup
N_{G^{(d)}}(v_{d-1})\backslash v_d\backslash v_{d-2}|$, we can get
that $|\{u|d_{G^{(d)}}(u, v_d)=2\}|\leq |\{u'|d_{G'}(u', v_d)=2\}|$.
Therefore, we have $e(G^{(d)}_2)\leq e(G^{(d-1)}_2)$.
\end{proof}

\begin{cor}\label{cor2.1}
If $G_2$ is triangle-free and $d$ ($d\geq 2$) is the diameter of
$G$, then $e(G^{(d)}_2)\leq e(G^{(2)}_2)$.
\end{cor}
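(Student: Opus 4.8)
The plan is to prove the corollary by induction on the diameter $d$, with Lemma \ref{lem2.3} providing the single inductive step. The base case $d=2$ is immediate, because then $G^{(d)}=G^{(2)}$ and the asserted inequality is an equality. So assume $d\ge 3$ and that the statement holds for every graph of diameter strictly less than $d$ whose distance-$2$ graph is triangle-free. Fix a spindle $v_0v_1\cdots v_d$ of $G^{(d)}$ and apply the operation ``move $V_d$ to $v_{d-2}$'' to obtain a graph $G'$. By Lemma \ref{lem2.3} we have $e(G^{(d)}_2)\le e(G'_2)$, and it then suffices to know that $G'$ is again a graph to which the inductive hypothesis applies, i.e. that $\mathrm{diam}(G')<d$ and that $G'_2$ is still triangle-free; granting this, $e(G'_2)\le e(G^{(2)}_2)$ and chaining the two inequalities gives $e(G^{(d)}_2)\le e(G^{(2)}_2)$.

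So the real content, and the main obstacle, lies in the two verifications, since the proof of Lemma \ref{lem2.3} only bookkeeps the pairs at distance $2$ that involve the end vertex $v_d$, whereas relocating the whole set $V_d$ perturbs distances throughout $G$. For the diameter I would first observe that deleting the edges from $v_{d-1}$ to $V_d$ and adding the edges from $v_{d-2}$ to $V_d$ cannot increase the distance between any two vertices other than $v_{d-1}$: any $x$--$y$ walk in $G^{(d)}$ passing through $v_{d-1}$ uses two neighbours of $v_{d-1}$, hence two vertices of $V_d\cup\{v_{d-2}\}$, and can be rerouted through $v_{d-2}$ with no increase in length. Since after the operation $v_{d-1}$ is a pendant vertex at $v_{d-2}$, and since $v_0v_1\cdots v_d$ was diametral with $v_d$ now within distance $d-1$ of $v_0$, one concludes that no pair of $G'$ is at distance $d$. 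For triangle-freeness I would argue by contradiction: a triangle in $G'_2$ is a triple of vertices pairwise at distance exactly $2$ in $G'$; using that the only adjacencies changed are those among $v_{d-1}$, $v_{d-2}$ and $V_d$, that $G_2$ was triangle-free, and the local structural restrictions recorded in Lemma \ref{lem2.1} and Observation \ref{obser2.1}, one rules out the creation of such a triple (the delicate subcases being triples meeting $v_{d-1}$, $v_{d-2}$, or $V_d$).

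With both points established, the induction closes and we obtain $e(G^{(d)}_2)\le e(G^{(d-1)}_2)\le\cdots\le e(G^{(2)}_2)$. I expect the triangle-free verification to be the harder of the two, as it is the place where one must genuinely control the global effect of moving $V_d$ rather than just the local picture around the spindle; the diameter bound, by contrast, should follow from the ``no walk through $v_{d-1}$ gets longer'' observation together with $v_{d-1}$ becoming pendant.
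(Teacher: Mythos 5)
Your overall route is the same as the paper's: the corollary is obtained by iterating Lemma \ref{lem2.3} until the diameter reaches $2$ (the paper's entire proof is the one-line chain $e(G^{(d)}_2)\le e(G^{(d-1)}_2)\le\cdots\le e(G^{(2)}_2)$). You are right that this iteration silently uses two facts that Lemma \ref{lem2.3} does not itself supply: that the graph produced by ``move $V_d$ to $v_{d-2}$'' really has smaller diameter, and that its distance-$2$ graph is still triangle-free so that the lemma may be applied again. The difficulty is that your proposal names these as ``the real content'' and then proves neither. The triangle-freeness check is reduced to the assertion that ``one rules out the creation of such a triple''; nothing is actually ruled out, and this is precisely the step where the global effect of relocating $V_d$ would have to be controlled.

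The diameter verification as sketched also does not go through. A single application of the operation to one spindle $v_0v_1\cdots v_d$ only destroys the diametral pair $(v_0,v_d)$ and pairs routed through $v_{d-1}$; a pair $x,y$ at distance $d$ whose shortest paths avoid $v_{d-1}$ is untouched, so $\mathrm{diam}(G')$ can still equal $d$ and the induction does not advance. Worse, your rerouting argument only shows that distances between vertices other than $v_{d-1}$ do not increase; it does not bound $d_{G'}(x,v_{d-1})=d_{G'}(x,v_{d-2})+1$, and if some $x$ has $d_G(x,v_{d-2})=d$ with no vertex of $V_d$ providing a shortcut, then $v_{d-1}$ ends up at distance $d+1$ from $x$, so the diameter is not even guaranteed to be nonincreasing. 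Closing this requires either repeated applications together with a termination argument, or a more careful choice of which spindle to move. To be fair, the paper's own proof omits all of these checks, so you have correctly located where the real gap lies; but your proposal does not close it.
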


\begin{proof}
By Lemma \ref{lem2.3}, we know that $e(G^{(d)}_2)\leq e(G^{(d-1)}_2)\leq \cdots \leq e(G^{(2)}_2)$.
\end{proof}

By Corollary \ref{cor2.1}, we know that to maximize $|e(G^{(d)}_2)|$
($d\geq2$), it suffices to get the maximum value of
$|e(G^{(2)}_2)|$. Thus, in the following part of the paper, we focus
on the graph $G^{(2)}$. For convenience, we write $G$ instead of
$G^{(2)}$.

\section{Proof of Theorem \ref{thm1.1}}
In this section, we give the proof of Theorem \ref{thm1.1}. For
convenience, we use $\{x,y\}$ or vertex-pair to stand for the vertex-pair such that $d_G(x,y)=2$. Actually, Theorem \ref{thm1.1} can be stated as the
following theorem.

\begin{thm}\label{thm3.1}
Let $G$ be a graph with $|V(G)| \geq 5$. If there is a vertex $v \in V(G)$ whose neighbourhood is covered by at most two cliques, then a triangle-free $G_2$ can have no more than $(n-1)^2/4+1$ edges.
\end{thm}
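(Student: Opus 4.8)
The plan is to exploit the tripartition of $V(G)$ around $v$. By Corollary~\ref{cor2.1} we may assume $diam(G)=2$. Fix a vertex $v$ whose neighbourhood is covered by two cliques $A,B$, which we may take disjoint, and set $C=N^2(v)$; by Observation~\ref{obser2.1}, $C$ is a clique. Write $a=|A|$, $b=|B|$, $c=|C|$, so $N:=a+b+c=n-1$. A first routine step is to classify the edges of $G_2$: since $A$, $B$, $C$ are cliques and $v$ is adjacent in $G$ to all of $A\cup B$ and to none of $C$, every edge of $G_2$ is either a pair $vz$ with $z\in C$, or a non-edge $xy$ of $G$ with $x\in A$, $y\in B$, or a non-edge $wz$ of $G$ with $w\in A\cup B$, $z\in C$. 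Hence
\[
e(G_2)=c+m+S=c+ab+ac+bc-\bigl(e_G(A,B)+e_G(A,C)+e_G(B,C)\bigr),
\]
where $m$ counts the non-edges of $G$ between $A$ and $B$ and $S$ counts the non-edges of $G$ between $A\cup B$ and $C$. The target inequality $e(G_2)\le (n-1)^2/4+1=N^2/4+1$ thus becomes a lower bound on $e_G(A,B)+e_G(A,C)+e_G(B,C)$.

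Next I would translate the two hypotheses into structural constraints. As in Lemma~\ref{lem2.1}, the only possible triangle of $G_2$ has exactly one vertex in each of $A$, $B$, $C$; absence of such a triangle is equivalent to: for every $z\in C$ the set of non-neighbours of $z$ in $N(v)$ induces a clique of $G$, together with the two symmetric statements obtained by playing $A$ (resp. $B$) against $B\cup C$ (resp. $A\cup C$). The diameter-two hypothesis gives that every $z\in C$ has a neighbour in $N(v)$ (so its non-neighbour set in $N(v)$ has size at most $a+b-1$), and that any $w\in A\cup B$ with no neighbour in $C$ forces every $z\in C$ to have a neighbour among the $N(v)$-neighbours of $w$.

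The argument then splits on the structure of $G[N(v)]$. If $m=0$, i.e. $N(v)$ is a clique, then each $z\in C$ misses at most $a+b-1$ vertices of $N(v)$, so $S\le c(a+b-1)$ and $e(G_2)\le c(a+b)=c(N-c)\le\lfloor N^2/4\rfloor$. The main case is $m\ge1$. Here, picking a non-edge $x_0y_0$ with $x_0\in A$, $y_0\in B$, the clique constraint forces $N_G(x_0)\cap C$ and $N_G(y_0)\cap C$ to cover $C$, hence $|N_G(x_0)\cap C|+|N_G(y_0)\cap C|\ge c$; running this over a maximum matching of the bipartite ``non-edge'' graph $H$ on $(A,B)$ yields $e_G(A,C)+e_G(B,C)\ge \nu(H)\,c$, while K\"onig's theorem gives $m\le \nu(H)\max\{a,b\}$ and $\omega(G[N(v)])=a+b-\nu(H)$, so $S\le c\,(a+b-\nu(H))$. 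Feeding these into the displayed identity reduces matters to a bounded family of sub-cases, governed by $\nu(H)$ and by whether $c$ exceeds $\max\{a,b\}$; in each of these the remaining discrepancy is removed by the diameter-two constraint --- concretely, by observing that a vertex $z\in C$ adjacent to all of the larger of the two cliques has $G_2$-degree at most $2$, and that a vertex of $N(v)$ with no neighbour in $C$ is heavily restricted --- after which a direct computation yields the bound and identifies the extremal configuration $c=2$, $a=b$, $e_G(A,B)=0$, i.e. the graph in which $z_1$ is joined to all of $B$, $z_2$ to all of $A$, and there are no edges between $A$ and $B$.

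I expect the main obstacle to be exactly this last case, $m\ge1$. The triangle-free and diameter-two hypotheses pull against each other --- many non-edges between $A$ and $B$ force the vertices of $C$ to have many $G$-neighbours, shrinking $S$, whereas few non-edges keep $m$ small --- and making this trade-off quantitative, with clean subdivisions on $a,b,c$ and on $\nu(H)$, and with the small values of $n$ (and the degenerate possibilities $a\in\{0,1\}$ or $b\in\{0,1\}$) checked separately, is the technical heart of the proof.
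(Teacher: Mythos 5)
Your setup is sound and in places cleaner than the paper's: reducing to $diam(G)=2$ via Corollary~\ref{cor2.1}, writing $e(G_2)=c+m+S$ as a count of non-edges across the tripartition $A,B,C$, and disposing of the case $m=0$ (the paper's Case~1) by the one-line estimate $S\le c(a+b-1)$, hence $e(G_2)\le c(N-c)\le N^2/4$, is correct and avoids the paper's sequence of ``moving'' operations. The triangle-free translations (non-neighbours of each $z\in C$ in $N(v)$ form a clique; each non-edge $x_0y_0$ across $(A,B)$ forces $N(x_0)\cap C$ and $N(y_0)\cap C$ to cover $C$) and the K\"onig bounds $m\le\nu(H)\max\{a,b\}$, $S\le c(a+b-\nu(H))$ are all valid.

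The genuine gap is the case $m\ge 1$, which you yourself flag as the ``technical heart'' and then do not carry out. The inequalities you actually derive give only $e(G_2)\le c(a+b+1)+\nu(H)(\max\{a,b\}-c)$, and this can exceed $N^2/4+1$ by about $N/2$: for instance $a=10$, $b=1$, $c=9$, $\nu=1$, with every $z\in C$ adjacent exactly to the single vertex of $B$, satisfies all of your triangle-free constraints and yields $c+m+S$ up to $109>101=N^2/4+1$. This configuration is killed only by the diameter-two condition (the vertices of $A$ would be at distance $3$ from $C$), which shows that the closing step is not a routine residual computation but the place where the real work happens; nothing in the proposal quantifies it. Moreover, one of the two auxiliary facts you invoke to finish is false: a vertex $z\in C$ adjacent to all of the larger clique $A$ has $G_2$-degree $1+|B\setminus N(z)|$, which can be as large as $1+b$ --- indeed in the extremal graph you correctly identify ($c=2$, $z_1$ joined to all of $A$, $z_2$ to all of $B$, $e_G(A,B)=0$) each $z_i$ has $G_2$-degree $(n-3)/2+1$, not $2$. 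For comparison, the paper closes this case differently: rather than bounding non-edge counts, it applies compression operations (deleting cross-edges and migrating vertices between the parts) that provably do not decrease $e(G_2)$, reducing $G$ to a canonical graph $G''$ with $e(G''_2)=xy+x+y+2$, $x+y=n-3$, which is then maximised directly. Your framework could plausibly be completed, but as written the decisive case is open.
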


\begin{proof}

Since $G_2$ is triangle-free, Lemma \ref{lem2.1} implies that $G$ is
claw-free. Let $v\in V(G)$, whose neighbours is covered by at most two cliques. Then the proof will be given by the following cases.

{\bf Case 1.} $N_G(v)$ is covered by only one clique.

Let $V_1=N_G(v)$ and $V_2=N^2_G(v)$. By Observation \ref{obser2.1},
$G[N^2_G(v)]$ is a clique. Suppose $V_1=V_{11}\cup V_{12}$ and
$V_2=V_{21}\cup B\cup V_{22}$, where $V_{21}$ is only adjacent to
$V_{11}$, $V_{22}$ is only adjacent to $V_{12}$, $B$ is adjacent to
both $V_{11}$ and $V_{12}$. Without loss of generality, we suppose that
$|V_{11}|\geq |V_{12}|$ and $|V_{21}|\geq |V_{22}|$.

By considering whether $V_{12}=\emptyset$ or not, we give the discussion as follows.

{\bf Subcase 1.1.} $V_{12}=\emptyset$.

Since $V_{22}$ is only adjacent to $V_{12}$, we have
$V_{22}=\emptyset$. That is, $G[V_1,V_2]$ is a complete
bipartite graph. Hence, $e(G_2)=|V_2|\leq n-2\leq (n-1)^2/4+1$.

{\bf Subcase 1.2.} $V_{12}\neq \emptyset$.

In this subcase, we will give the proof in detail as follows.

{\bf Subsubcase 1.2.1.} $V_{22}\neq \emptyset$.

Let $d=|V_{22}|$. We can define a new graph $G'$ (see Figure \ref{fig2}) as follows. $V(G')=V(G)$, $V'_1=N_{G'}(v)= V'_{11} \cup V'_{12}$ and $V'_2=N^2_{G'}(v)= V'_{21} \cup V'_{22}$, where $G'[V'_1]$ and $G'[V'_2]$ are both cliques, $G'[V'_{11},V'_{21}]$ and $G'[V'_{12},V'_{22}]$ are both complete bipartite graphs, and $|V'_{11}|=1,~|V'_{12}|=|V_{11}|+|V_{12}|-1,~|V'_{21}|=|V_{21}|+|B|,~|V'_{22}|=|V_{22}|$.
\begin{figure}[h,t,b,p]
\begin{center}
\includegraphics[bb = 190 590 375 715, scale = 0.9]{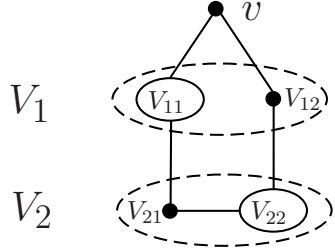}
\caption{A new graph $G'$} \label{fig2}
\end{center}
\end{figure}

Suppose that $d=1$. In the following, we show that $G'$ can be obtained from $G$ by applying the corresponding operations mentioned in the following paper. What is more, we show that such operations ensure that the number of vertices-pairs remains the same or increases.

Firstly, delete the edges between $V_{12}$ and $B$, and it is
obvious that the number of $\{u_1, u_2\}$ is not decreasing, where $u_1, u_2 \in V(G)$. Then $V'_{21}=V_{21}\cup B$ and $V'_{22} = V_{22}$.

Secondly, suppose $u \in V_{11}$ such that $u$ is adjacent to all the vertices in $V'_{21}$. Delete the edges between $V_{11}\setminus \{u\}$ and $V'_{21}$, meanwhile, connect all the vertices in $V_{11}\setminus u$ and all the vertices in $V'_{22}$. Then $V'_{11}=\{u\}$ and $V'_{12}= \{V_{11}\setminus \{u\}\} \cup V_{12}$. Therefore, we get the graph $G'$ (see Figure \ref{fig3}). Let $a=|V_{11}|,~b=|V_{12}|,~c=|V_{21}|+|B|$. Since $e(G_2)=a+bc+c+1$ and $e(G'_2)=(a+c-1)b+(a+c+1)$, then $e(G'_2)-e(G_2)=(a-1)b \geq 0$, that is, after applying the above operation, we ensure that the number
of $\{u, v\}$ is not decreasing.

For the graph $G'$, we have $e(G'_2) = xy + x +2$ where $x=|V'_{12}|,~y=|V'_{21}|$, such that $x+y=n-3$ and $x \geq 1,~ y \geq 1$. Thus, $e(G'_2)\leq (n-2)^2/4+2 < (n-1)^2/4 + 1$, where $n \geq 5$.

Now suppose that $d\geq 2$. Let $u_1 \in V_{21}$ and $u_2 \in V_{12}$. Delete the edges between $V_{21}\setminus \{u_1\} \cup B$ and $V_{22}$ and the edges between $V_{12} \setminus \{u_2\}$ and $V_{11}$; meanwhile move the vertices of $V_{21} \setminus \{u_1\} \cup B$ to $V_{11}$ (the new vertex set obtained is denoted by $V'_{12}$), and the vertices in $V_{12}\setminus \{u_2\}$ with $V_{22}$ (the new vertex set obtained is denoted by $V'_{21}$), therefore, we get the graph $G'$ (see Figure \ref{fig3}), where $V'_{11}=\{u_1\}$ and $V'_{22}=\{u_2\}$. Let $a=|V_{11}|,~b=|V_{12}|,~c=|V_{21}|+|B|$. Since $e(G_2)=ad+bc+c+d$ and $e(G'_2)=(a+c-1)(b+d-1)+(a+c-1+1)+1$, then $e(G'_2)-e(G_2)=a(b-1)+(c-1)(d-2)\geq 0$, that is, using the above operation we ensure that the number of $\{w_1, w_2\}$ is not decreasing, where $w_1,w_2 \in V(G')$.

For the graph $G'$, we have $e(G'_2) = xy + x +2$, where $x=|V'_{12}|,~y=|V'_{21}|$ such that $x+y=n-3$ and $x \geq 1,~ y \geq 1$. Thus, $e(G'_2)\leq (n-2)^2/4+2 < (n-1)^2/4 + 1$, where $n \geq 5$.

\begin{figure}[h,t,b,p]
\begin{center}
\includegraphics[bb = 152 615 440 715, scale = 0.9]{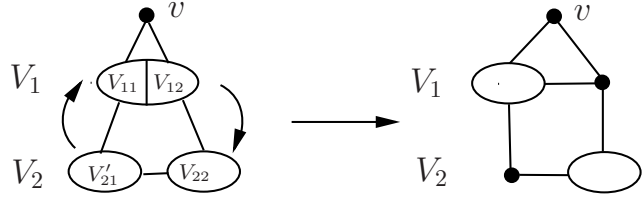}
\caption{A new graph $G'$} \label{fig3}
\end{center}
\end{figure}

{\bf Subsubcase 1.2.2.} $V_{22}=\emptyset$.

For this case, delete the edges between $V_{11}$ and $B$, then it returns to {\bf Subsubcase 1.2.1}.

By combining all the situations in {\bf Subcase 1.1}, we get that
$e(G)\leq (n-2)^2/ 4+ 2 < (n-1)^2/4 +1$, where $n \geq 5$.

{\bf Case 2.} $G[N_G(v)]$ is covered by two cliques.

In this case, $G[N_G(v)]$ is covered by two cliques, denoted by $V_1$ and $U_1$. According to the condition that whether $e(U_1,V_1) = 0$ or not, we prove it by the following subcases.

{\bf Subcase 2.1} $e(U_1,V_1) = 0$.

Let $V_2 = N^2_G(v)$. $V_2$ can be divided into three parts $A$, $B$ and $C$, where $A$ is only adjacent to $V_1$, $B$ is adjacent to both $V_1$ and $U_1$, and $C$ is only adjacent to $U_1$. Now we give the following claim.

\begin{claim}
$G[V_1, A]$ and $G[U_1, C]$ are both complete bipartite graphs.
\end{claim}

\begin{proof}
If $G[V_1, A]$ is not a complete bipartite graph, then there are
vertices $u_1 \in A$, $u_2 \in V_1$, $w \in U_1$ such that $d_G(u_1,u_2)=d_G(u_1,w)=d_G(u_2,w)=2$. Thus, the three vertices $u_1,u_2,w$ form a triangle in $G_2$, a contradiction.

Similarly, $G[U_1, C]$ is also a complete bipartite graph.
\end{proof}

Now we divide vertex set $B$ into three parts $B_1$, $B_2$ and $B_3$, where $B_1$ is only adjacent to $V_1$, $B_2$ is adjacent to both $V_1$ and $U_1$, and $B_3$ is only adjacent to $U_1$.

\begin{claim}
$G[V_1, B_1 \cup B_2]$ and $G[U_1, B_2 \cup B_3]$ are both complete bipartite graphs.
\end{claim}

\begin{proof}
To prove Claim 2, it suffices to show that every vertex $u \in
B$ is adjacent to all the vertices of $V_1$ or $U_1$ , that is, at least
one of $G[V_1, u]$ and $G[U_1, u]$ is a complete bipartite graph.
Suppose that there is a vertex $u \in B$ such that neither $G[V_1,
u]$ nor $G[U_1, u]$ is a complete bipartite graph. Then there are
vertices $w_1 \in V_1$, $w_2 \in U_1$ such that $d_G(u,w_1)=d_G(u,w_2)=d_G(w_1, w_2)=2$. Thus, $uw_1, w_1w_2, w_2u \in e(G_2)$, contradicting to the fact that $G_2$ is triangle-free.
\end{proof}

In the following, we apply some operations on $G$ to maximise $|e(G_2)|$.

We define a new graph $G'$ as follows. $G'$ with $V'_1=V_1$, $U'_1=U_1$, $V'_2=A' \cup C'$ ($A'=A \cup B_1 \cup B_2$ and $C'=B_3 \cup C$), where $G[V'_1, A']$, $G[V'_2, C']$ and $G[A', C']$ are complete bipartite graphs, $G[V'_1]$, $G[V'_2]$, $G[A']$ and $G[B']$ are complete graphs.

Form $G$ to $G'$, we perform the following operations: delete the edges between $B_1$ and $U_1$, remove the edges between $B_2$ and $U_1$, and the edges between $B_3$ and $V_1$. It is obviously that the number of vertices-pair at distance two is not decrease.

Now we construct a new graph $G''$ (see Figure \ref{fig4}) which is obtained from $G'$, that is, let $V(G'') = V(G')$, move the vertex in $A' \setminus \{u\}$ to $V'_1$, and the vertices in $C' \setminus \{w\}$ to $U'_1$. The new vertices sets are denoted by $V''_1$ and $U''_1$, where $u \in A'$ and $w \in B'$.

Let $a=|V'_1|$, $b=|V'_2|$, $c=|A'|$, $d=|B'|$. Since $e(G'_2)=ab+ad+bc+c+d$ and  $e(G''_2)=(a+c-1)(b+d-1)+(a+c-1)+(b+d-1)+2$, then $e(G''_2)-e(G'_2)=(c-1)(d-1)\geq 0$, that is, after using the move operation the number of vertex-pairs whose distance is two is not decreasing.

Now, for the graph $G''$, let $x=|V''_1|$ and $y=|U''_1|$. Then $e(G''_2)= xy+x+y+2$, where $x+y=n-3$ and $x,y \geq 1$. By some calculations, we get that $e(G''_2)\leq (n-1)^2/ 4+1$. And the equality holds if and only if  $n$ is odd and $x=y=(n-1)/2$, where $n \geq 5$.

\begin{figure}[h,t,b,p]
\begin{center}
\includegraphics[bb = 130 608 454 720, scale = 0.9]{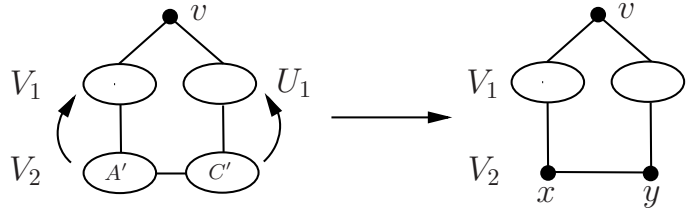}
\caption{A new graph $G''$} \label{fig4}
\end{center}
\end{figure}

Combining all the situations in {\bf Case 1}, $e(G_2)\leq (n-1)^2/
4+1$ follows on condition that $N_G(v)$ is covered by at least two
cliques.

{\bf Subcase 2.2} $e(U_1,V_1) \neq 0$.

In this subcase, if there is not a subset $B$ of $V_2$ such that the vertex in $B$
can form the vertex-pairs with some vertices of $V_{11} \subseteq V_1$ and meanwhile
with some vertices of $U_{11} \subseteq U_1$, then we delete the edges between $U_1$
and $V_1$. Now it returns to {\bf Subcase 2.1}.

If there is a subset $B$ of $V_2$ such that the vertices in $B$ can form the
vertex-pairs both with some vertices of $V_{11} \subseteq V_1$ and $U_{11} \subseteq
U_1$, then divide $B$ into two part $B_1$ and $B_2$, delete the edges between $V_1$
and $U_1$, move $B_1$ to $V_1$, and move $B_2$ to $U_1$. Now we want to prove that
there always exists such $B_1$ and $B_2$ to ensure the number of the vertex-pairs
not decreasing. Let $a=|V_{11}|$, $b=|U_{11}|$, $c=|B_1|$, $d=|B_2|$. By only
considering the vertex-pairs of those set, the change of the number is at least
$(a+c)(b+d) - (c+d)(a+b) = (a-d)(b-d) $, which is no less than $0$ ( by some
knowledge of the inequality, no matter how much $(a+c)$ and $(c+d)$ are, we can find
some number to make it right). Now this subcase returns to the {\bf Subcase 2.1}.

Combining all the cases, we complete the proof of Theorem \ref{thm3.1}.
\end{proof}

From the proof of Theorem \ref{thm3.1}, we can easily get Corollary \ref{cor1.1}. By
Lemmas \ref{lem2.1} and \ref{lem2.2}, if we can prove the following statement: all
the claw-free graphs with diameter two, which has no three vertices pairwise at
distance $2$, then it has at most $(n^2 - 1)/4 + 1$ pairs of vertices at distance
$2$, then we can confirm the guess of Tyomkyn and Uzzell.

\end{document}